\newtheorem{thm}{Theorem}
\newtheorem{lem}[thm]{Lemma}
\theoremstyle{definition}
\theoremstyle{remark}
\newtheorem{ex}[thm]{Example}
\newcommand{\ML}{\mathcal{M}\mathcal{L}}
\newcommand{\F}{\mathcal{F}}
\newcommand{\Pm}{\mathrm{Prim}}
\newcommand{\mset}{\emptyset}
\newcommand{\Pme}{\mathrm{Prime}}
\newcommand{\MP}{Min-Primal}
\begin{document}
\baselineskip=18pt
\title{Topological properties of spaces of ideals of the minimal tensor product}
\author{Aldo J. Lazar}
\address{School of Mathematical Sciences\\
         Tel Aviv University\\
         Tel Aviv 69778, Israel}
\email{aldo@post.tau.ac.il}

\thanks{}%
\subjclass{46L06} \keywords{closed two sided ideal, prime ideal, minimal primal ideal, minimal tensor product}
\date{June 16, 2009}%
%\commby{}%
% ---------------------------------------------------------------
\begin{abstract}
   One shows that for two $C^*$-algebras $A_1$ and $A_2$ any continuous function on $\Pm(A_1)\times \Pm(A_2)$  can be
   continuously extended to $\Pm(A_1\otimes_{\mathrm{min}}~ A_2)$ provided it takes its values in a $T_1$ topological space.
   This generalizes \cite[Corollary 3.4]{B}. A new proof is
   given for a result of Archbold \cite{A} about the space of minimal primal ideals of $A_1\otimes_{\mathrm{min}} A_2$.
   To obtain these two results one makes use of the topological properties of the space of prime ideals of the
   tensor product.
\end{abstract}
\maketitle
% ------------------------------------------------------------
\section{Introduction and preliminaries} \label{S:intr}

The prime ideal space of $A_1\otimes A_2$, the minimal tensor product of two $C^*$-algebras $A_1$ and $A_2$, has
some interesting topological properties in relation with the prime ideal spaces of the factors: there is a
homeomorphism of $\Pme(A_1)\times \Pme(A_2)$ onto a dense subset of $\Pme(A_1\otimes A_2)$ and a continuous map of
the latter space onto the first which, with the obvious identification, is a retract onto $\Pme(A_1)~\times~
\Pme(A_2)$. It turns out that these maps can be useful in getting information on the structure of $A_1\otimes A_2$.
Usually one employs the primitive ideal space to this end but since we do not know if a retraction as above exists
in the case of $\Pm(A_1\otimes A_2)$, the primitive ideal space of $A_1\otimes A_2$, we have to use the prime ideal
space instead.

By identifying the commutant of $A_1\otimes \mathbf{1}$ in the multiplier algebra of $A_1\otimes A_2$ Brown showed
in \cite[Corollary 3.4]{B} that any bounded complex-valued continuous function on $\Pm(A_1)\times \Pm(A_2)$ has a
continuous extension to $\Pm(A_1\otimes A_2)$. The above mentioned retraction together with a device created by
Kirchberg in \cite{K} which completes a topological space with all its closed prime subsets allow us to find such an
extension for every continuous function whose range is a $T_1$ topological space.

Kaniuth proved in \cite{Ka} that if $A_1\otimes A_2$ has the property $(F)$ of Tomiyama then the minimal primal
space (see below for the definition) of $A_1\otimes A_2$ is canonically homeomorphic to \textup{\MP}$(A_1)\times$
\textup{\MP}$(A_2)$. Following that, Archbold proved in \cite{A} that the same conclusion is valid in a more general
situation than the presence of the property $(F)$. We give here a proof of this result of Archbold by using
topological methods.

For a topological space $X$ we denote by $\F(X)$ the collection of all its closed subsets. We endow $\F(X)$ with the
topology generated by all the families $\{F\in \F(X) \mid F\cap U\neq \mset\}$ where $U$ is an open subset of $X$.
If $X$ is a $T_0$ space then the map $x\to \overline{\{x\}}$ is a homeomorphism of $X$ into $\F(X)$. A subset $L$ of
$X$ is a limit set if there exists a net in $X$ that converges to all the points of $L$; by \cite[Lemme 9]{D} this
is the same as saying that each finite collection of open subsets that intersect $L$ has a non void intersection. By
Zorn's lemma every limit set is contained in a maximal (closed) limit set. The family of all maximal limit sets of
$X$ is denoted $\ML(X)$ and will be considered with its relative topology inherited from $\F(X)$. A non void closed
subset $F$ of $X$ is called prime if it is not the union of two closed subsets each different from $F$. Obviously,
for each $x\in X$, $\overline{\{x\}}$ is prime. A space is called point-complete if each closed prime subset of it
is the closure of a singleton. Following \cite{K} we shall denote by $X^c$ the family of all closed prime subsets of
a $T_0$ topological space $X$ endowed with the relative topology as a subfamily of $\F(X)$ and we shall call it the
point-complete envelope of $X$. It is indeed a point-complete $T_0$ space. The base space $X$ will be identified
with a subset of $X^c$.

Given a $C^*$-algebra $A$, an ideal of $A$ will always be a closed two sided ideal. We denote by $\mathrm{Id}(A)$
and $\mathrm{Id}^{\prime}(A) := \mathrm{Id}(A)\setminus \{A\}$. The topology of the space of primitive ideals,
$\Pm(A)$, is the usual hull-kernel topology and that of $\mathrm{Id}(A)$ is that one acquires by pulling the
topology of $\F(\Pm(A))$ when one associates to each closed subset of $\Pm(A)$ its kernel. The relative topology of
$\Pme(A)$ is also the hull-kernel topology and from here on by the hull of the ideal $I$, denoted $\mathrm{hull}I$,
we shall always mean the hull of $I$ in $\Pme(A)$. Clearly $\Pme(A)$ is $\Pm(A)^c$. An ideal $I$ of $A$ is called
primal, cf. \cite[Definition 3.1]{AB}, if for every finite family of $\mathrm{Id}(A)$ with at least two members and
zero product, $I$ contains one ideal of the family. An ideal is primal if and only if its hull is a closed limit
set, see \cite[Proposition 3.2]{A87}. There the hull is taken in the primitive ideal space but the same proof works
for prime ideals as well. Any primal ideal contains a minimal primal ideal (Zorn's lemma) and there is a one to one
correspondence between the family of all minimal primal ideals, \textup{Min-Primal}$(A)$, and $\ML(\Pme(A))$.

Let now $A_1$ and $A_2$ be $C^*$-algebras. For $I_j$ an ideal of $A_j$ we denote by $q_{I_j}$ the quotient map of
$A_j$ onto $A_j/I_j$. One defines the maps $\Phi,\Delta : \mathrm{Id}(A_1)\times \mathrm{Id}(A_2)\to
\mathrm{Id}(A_1\otimes A_2)$ by
\[
 \Phi(I_1,I_2) := ker(q_{I_1}\otimes q_{I_2}), \quad \Delta(I_1,I_2) := I_1\otimes A_2 + A_1\otimes I_2.
\]
Then $\Phi$ is a homeomorphism of $\mathrm{Id}^{\prime}(A_1)\times \mathrm{Id}^{\prime}(A_2)$ onto a dense subset of
$\mathrm{Id}^{\prime}(A_1\otimes A_2)$, see \cite[Theorem 6]{L}. Its restriction to $\Pme(A_1)\times \Pme(A_2)$ maps
it homeomorphically onto a dense subset of $\Pme(A_1\otimes A_2)$, see \cite[Lemma 2.13(v)]{BK} and \cite[Corollary
8]{L}. For $I$ an ideal of $A_1\otimes A_2$ one defines
\[
 I_{A_1} := \{a_1\in A_1 \mid a_1\otimes A_2\subset I\}, \quad I_{A_2} := \{a_2\in A_2 \mid A_1\otimes a_2\subset
 I\}
\]
and $\Psi(I) := (I_{A_1},I_{A_2})$. Then $\Psi : \mathrm{Id}(A_1\otimes A_2)\to \mathrm{Id}(A_1)\times
\mathrm{Id}(A_2)$ is continuous and $\Psi\circ \Phi$ restricted to $\mathrm{Id}^{\prime}(A_1)\times
\mathrm{Id}^{\prime}(A_2)$ is the identity map, see \cite[proof of Theorem 6]{L}. By this and \cite[Lemma 2.13]{BK}
$\Psi$ maps $\Pme(A_1\otimes A_2)$ onto $\Pme(A_1)\times \Pme(A_2)$.

\section{Extensions of continuous functions} \label{S:ext}

We begin with a simple lemma on extensions of continuous functions from a topological space to its point-complete
envelope.

\begin{lem} \label{L:env}

   Let $X$ be a $T_0$ topological space and $f$ a continuous function from $X$ into a $T_1$ space $Y$. Then $f$ has
   a (unique) continuous extension from $X^c$ to $Y$.

\end{lem}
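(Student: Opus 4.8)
The plan is to define the extension pointwise on $X^c$ by transporting $f$ along the identification $x\mapsto\overline{\{x\}}$ together with the functorial behaviour of closures, then to verify continuity directly from the generating sets of the topology on $\F(X)$, and finally to read off uniqueness from the density of $X$ in $X^c$. Fix $P\in X^c$, that is, a closed prime subset of $X$. Since $f$ is continuous, $f(P)$ is an irreducible subset of $Y$, so $\overline{f(P)}$ is a closed prime subset of $Y$. I would argue that, because $Y$ is $T_1$, this irreducible closed set is the closure of a (necessarily unique) point $y_P\in Y$, and set $\tilde f(P):=y_P$. On the copy of $X$ sitting inside $X^c$ this agrees with $f$: if $P=\overline{\{x\}}$ then continuity into a $T_1$ space forces $f$ to be constant on $\overline{\{x\}}$ (any $z\in\overline{\{x\}}$ gives $f(z)\in\overline{\{f(x)\}}=\{f(x)\}$), whence $\overline{f(\overline{\{x\}})}=\{f(x)\}$ and $\tilde f(\overline{\{x\}})=f(x)$.

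For continuity, let $V\subseteq Y$ be open. I claim $\tilde f^{-1}(V)=\{P\in X^c \mid P\cap f^{-1}(V)\neq\mset\}$, which is by definition the trace on $X^c$ of a generating open set of $\F(X)$ (as $f^{-1}(V)$ is open in $X$), hence open. The claim rests on the chain $y_P\in V \iff \overline{f(P)}\cap V\neq\mset \iff f(P)\cap V\neq\mset \iff P\cap f^{-1}(V)\neq\mset$: the first equivalence holds because $y_P$, being the generic point of $\overline{f(P)}$, lies in every nonempty relatively open subset of $\overline{f(P)}$, while conversely $y_P\in V$ gives $\overline{f(P)}\cap V\neq\mset$; the middle one because an open set meets a set exactly when it meets its closure; the last by definition of the preimage. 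Thus preimages of open sets are open and $\tilde f$ is continuous.

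For density and uniqueness, I would first show $X$ is dense in $X^c$. A basic neighbourhood of $P$ has the form $\bigcap_{i=1}^n\{F\in X^c \mid F\cap U_i\neq\mset\}$ with each $U_i$ open in $X$ and $U_i\cap P\neq\mset$; since $P$ is prime, the nonempty relatively open sets $U_i\cap P$ have, by irreducibility, a common point $x\in P\cap\bigcap_i U_i$, and then $\overline{\{x\}}\subseteq P$ meets every $U_i$, so $\overline{\{x\}}$ lies in the neighbourhood. For uniqueness, if $g$ is another continuous extension, then for $P\in X^c$ I would use density to choose $x_\alpha\in P$ with $\overline{\{x_\alpha\}}\to P$; continuity places $g(P)$ in the closure of the net $f(x_\alpha)$, and comparing this with $\overline{f(P)}=\overline{\{y_P\}}$ pins down $g(P)=y_P=\tilde f(P)$.

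The step I expect to be the main obstacle is the well-definedness of $y_P$: namely, that the irreducible closed set $\overline{f(P)}$ is genuinely the closure of a single point of $Y$, equivalently that $f$ is constant on every closed prime subset $P$ of $X$. This is precisely where the separation hypothesis on $Y$ must be exploited in full, and it is the point on which I would concentrate the argument; once it is secured, the continuity and uniqueness arguments above are routine manipulations with the generating sets of $\F(X)$.
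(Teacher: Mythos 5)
You have reproduced the architecture of the paper's proof: define $\tilde f(P)$ to be the common value of $f$ on the closed prime set $P$, observe that $\tilde f^{-1}(V)=\{P\in X^c \mid P\cap f^{-1}(V)\neq\mset\}$ is the trace on $X^c$ of a generating open set of $\F(X)$, and obtain uniqueness from the density of $X$ in $X^c$ (a point the paper states but does not argue; your density and net arguments for it are fine). But you have not proved the lemma, because the one step that carries all of its content --- that $f$ is constant on each closed prime subset $P$, equivalently that $\overline{f(P)}$ is a singleton, so that $y_P$ is well defined --- is exactly the step you leave open, announcing only that you ``would concentrate the argument'' there. Everything after that point is, as you say, routine; that point is the lemma.

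Moreover, the route you sketch for closing it does not work as stated. It is false that an irreducible closed subset of a $T_1$ space is the closure of a point: an infinite set with the cofinite topology is $T_1$ and irreducible (any two nonempty open sets are cofinite, hence meet) but is not a singleton and has no generic point. So you cannot simply invoke ``irreducible plus $T_1$ implies point closure''; you must use the continuity of $f$ together with the separation in $Y$ to exhibit a decomposition of $S$ itself. The paper's argument is: if $f$ took two values $y_1\neq y_2$ on a prime closed $S$, choose open $V_1\ni y_1$, $V_2\ni y_2$ with $y_1\notin V_2$ and $y_2\notin V_1$, and write $S$ as the union of the proper closed subsets $S\cap f^{-1}(Y\setminus V_1)$ and $S\cap f^{-1}(Y\setminus V_2)$. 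Note that even here the two pieces cover $S$ only when $f(S)$ misses $V_1\cap V_2$, which one can guarantee by taking $V_1\cap V_2=\mset$ when $Y$ is Hausdorff (as it is in the intended application to complex-valued functions); the cofinite example above shows that for a general $T_1$ target this covering can genuinely fail. So the step you deferred is not merely technical --- it is delicate, and your proposal stands or falls on supplying it.
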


\begin{proof}

   The function $f$ is constant on any prime closed subset of $X$. Indeed, if $S$ is such a subset and $f$ assumes two
   different values $y_1\neq y_2$ on $S$ then we choose open neighbourhoods $V_1$, $V_2$ of $y_1$, $y_2$
   respectively such that $y_1\notin V_2$ and $y_2\notin V_1$. Set now $S_1 := S\cap f^{-1}(Y\setminus V_1)$ and
   $S_2 := S\cap f^{-1}(Y\setminus V_2)$ and $\{S_1, S_2\}$ is a non-trivial decomposition of $S$.

   We define now $\tilde{f} : X^c\to Y$ by $\tilde{f}(S) := f(x)$ for $x\in S$. Then $\tilde{f}$ is well defined and
   it is an extension of $f$. For $U$ an open subset of $Y$ we have
   \[
    \{S\in X^c \mid \tilde{f}(S)\in U\} = \{S\in X^c \mid S\cap f^{-1}(U)\neq \mset\}
   \]
   and the continuity of $\tilde{f}$ is established.
\end{proof}

We come now to the generalization of \cite[Corollary 3.4]{B}. There the the functions were considered on the spectra
of the algebras; we prefer to work with the spaces of primitive ideals but, of course, there is no difficulty in
obtaining a version of the following result in terms of spectra.

\begin{thm}

   Let $A_1$ and $A_2$ be $C^*$-algebras and $\Phi : \mathrm{Id}^{\prime}(A_1)\times \mathrm{Id}^{\prime}(A_2)\to
   \mathrm{Id}^{\prime}(A_1\otimes~ A_2)$ be the canonical homeomorphism. Then for every $T_1$ topological space $Y$
   and any continuous function $f : (\Pm(A_1)\times \Pm(A_2))\to Y$, the function
   $f\circ \Phi^{-1} : \Phi(\Pm(A_1)\times \Pm(A_2))\to Y$ has a (unique) continuous
   extension from $\Pm(A_1\otimes~ A_2)$ to $Y$.

\end{thm}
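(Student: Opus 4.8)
The plan is to transport $f$ to the prime ideal space of the tensor product through the retraction $\Psi$, using Lemma~\ref{L:env} (Kirchberg's device) to manufacture the intermediate extensions, rather than fighting directly with primitive ideals of $A_1\otimes A_2$. Since $\Pm(A_1)\times\Pm(A_2)$ is $T_0$ and $Y$ is $T_1$, Lemma~\ref{L:env} first yields a unique continuous extension $F\colon (\Pm(A_1)\times\Pm(A_2))^c\to Y$ of $f$ to the point-complete envelope of the product. The hypothesis that $Y$ is $T_1$ is exactly what legitimizes this step.

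The heart of the argument is to connect this envelope with the product $\Pme(A_1)\times\Pme(A_2)=\Pm(A_1)^c\times\Pm(A_2)^c$ on which $\Psi$ lands. I would define $j\colon \Pme(A_1)\times\Pme(A_2)\to (\Pm(A_1)\times\Pm(A_2))^c$ by $j(S_1,S_2):=S_1\times S_2$, where $S_i$ is the prime closed subset of $\Pm(A_i)$ corresponding to the prime ideal. This is well defined because the product of two irreducible (that is, prime) closed sets is again irreducible closed, so $S_1\times S_2$ is indeed a point of $(\Pm(A_1)\times\Pm(A_2))^c$; continuity of $j$ follows by checking that the preimage of a subbasic set $\{T\mid T\cap (U_1\times U_2)\neq\mset\}$ is the product $\{S_1\mid S_1\cap U_1\neq\mset\}\times\{S_2\mid S_2\cap U_2\neq\mset\}$, which is open, and by reducing a general open $U\subseteq \Pm(A_1)\times\Pm(A_2)$ to such rectangles.

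With $j$ in hand I would set $H:=F\circ j\circ\Psi\colon \Pme(A_1\otimes A_2)\to Y$, continuous as a composition of continuous maps, and take its restriction to $\Pm(A_1\otimes A_2)$ as the desired extension. To see that $H$ extends $f\circ\Phi^{-1}$, take $(P_1,P_2)\in \Pm(A_1)\times\Pm(A_2)$: since $\Psi\circ\Phi$ is the identity, $\Psi(\Phi(P_1,P_2))=(P_1,P_2)$, which as a point of $\Pm(A_1)^c\times\Pm(A_2)^c$ is the pair $(\overline{\{P_1\}},\overline{\{P_2\}})$; then $j$ sends it to $\overline{\{P_1\}}\times\overline{\{P_2\}}=\overline{\{(P_1,P_2)\}}$, the canonical image of $(P_1,P_2)$ in the envelope, so that $H(\Phi(P_1,P_2))=F(\overline{\{(P_1,P_2)\}})=f(P_1,P_2)$.

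Uniqueness I would derive from the fact that $\Phi(\Pm(A_1)\times\Pm(A_2))$ is dense in $\Pme(A_1\otimes A_2)$ (because $\Pm(A_i)$ is dense in $\Pme(A_i)$ and $\Phi$ maps $\Pme(A_1)\times\Pme(A_2)$ homeomorphically onto a dense subset) together with the mechanism in the proof of Lemma~\ref{L:env}: a continuous map into a $T_1$ space is constant on every prime closed subset, so it is completely determined by the values it takes on the dense copy of the base space sitting inside its envelope. The step I expect to be the main obstacle is the construction and continuity of $j$ — the observation that products of prime closed sets remain prime — since this is precisely the bridge that lets the retraction $\Psi$ feed into the envelope where $F$ lives; a secondary delicate point is carrying out the uniqueness argument with only $T_1$ (rather than Hausdorff) separation, which is why the constancy-on-prime-sets property supplied by Lemma~\ref{L:env} is indispensable.
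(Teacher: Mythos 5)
Your argument is correct and is essentially the paper's proof: both produce the extension as the restriction to $\Pm(A_1\otimes A_2)$ of $\tilde f\circ\nu\circ\Psi$, where $\tilde f$ comes from Lemma~\ref{L:env} and $\nu$ identifies $\Pme(A_1)\times\Pme(A_2)$ with $(\Pm(A_1)\times\Pm(A_2))^c$. The only difference is that the paper obtains $\nu$ by citing Kirchberg's Proposition~7.9, whereas you construct it directly as $(S_1,S_2)\mapsto S_1\times S_2$ and check continuity by hand (which is all that is needed, since the homeomorphism property is never used); your density-based remark on uniqueness is no less complete than the paper's, which does not argue the point at all.
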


\begin{proof}

   Lemma \ref{L:env} yields a continuous extension $\tilde{f} : (\Pm(A_1)\times \Pm(A_2))^c\to Y$ of $f$. By
   \cite[Proposition 7.9]{K} there is a homeomorphism $\nu$ from $\Pm(A_1)^c\times \Pm(A_2)^c = \Pme(A_1)\times
   \Pme(A_2)$ onto $(\Pm(A_1)\times \Pm(A_2))^c$ which is the identity on the copies of $\Pm(A_1)\times \Pm(A_2)$
   contained in these two spaces. Now, with $\Psi : \mathrm{Id}^{\prime}(A_1\otimes A_2)\to
   \mathrm{Id}^{\prime}(A_1)\times \mathrm{Id}^{\prime}(A_2)$ defined as in Section \ref{S:intr}, the function
   $\tilde{f}\circ \nu\circ \Psi : \Pme(A_1\otimes A_2)\to Y$ is continuous. The extension $\hat{f}$ which we need
   is the restriction of $\tilde{f}\circ \nu\circ \Psi$ to $\Pm(A_1\otimes A_2)$. Indeed, if $(P_1,P_2)\in
   \Pm(A_1)\times \Pm(A_2)$ then $\hat{f}(\Phi(P_1,P_2)) = f(\nu(P_1,P_2)) = f(P_1,P_2)$ since $\Psi(\Phi(P_1,P_2))
   = (P_1,P_2)$.

\end{proof}

\section{Minimal primal ideals} \label{S:min}

In this section we present our proof for Archbold's result \cite{A} on minimal primal ideals for tensor products.
The first step is a topological lemma.

\begin{lem} \label{L:retr}

   Let $X_1$, $X_2$, and $Y$ be topological spaces and $\phi$ a homeomorphism of $X_1\times X_2$ onto a dense subset
   $Z$ of $Y$. Suppose there is a continuous map $\psi : Y\to X_1\times X_2$ such that $\psi\circ \phi$ is the
   identity map of $X_1\times X_2$ and for each $(M_1,M_2)\in \ML(X_1)\times \ML(X_2)$, $\psi^{-1}(M_1\times M_2)$
   is the closure of $\phi(M_1\times M_2)$. Then $(M_1, M_2)\to \psi^{-1}(M_1\times M_2)$ is a homeomorphism, $\Theta$ say,
   of $\ML(X_1)\times \ML(X_2)$ onto $\ML(Y)$.

\end{lem}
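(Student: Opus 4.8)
The plan is to first record how limit sets behave under products and under continuous maps, then exploit the retraction hypothesis ($\psi\circ\phi=\mathrm{id}$, $\psi^{-1}(M_1\times M_2)=\overline{\phi(M_1\times M_2)}$) to see that $\Theta$ is a well-defined bijection onto $\ML(Y)$, and finally verify continuity in both directions directly from the subbasic description of the topology on $\F(\cdot)$. The preliminary facts I would set up are: (i) if $L$ is a limit set in $X_1\times X_2$ then, projecting a net that converges to every point of $L$, each $\pi_i(L)$ is a limit set in $X_i$ and $L\subseteq \pi_1(L)\times\pi_2(L)$; conversely, combining nets over a product directed set, $L_1\times L_2$ is a limit set whenever the $L_i$ are. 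Consequently, for $M_i\in\ML(X_i)$ the product $M_1\times M_2$ is a \emph{maximal} limit set of $X_1\times X_2$, and every limit set of $X_1\times X_2$ is contained in such a product (using Zorn to enlarge each $\pi_i(L)$). I would also note that a continuous map carries limit sets to limit sets (push the net forward), that closures of limit sets are limit sets, and that $\psi$ is surjective because $\psi\circ\phi=\mathrm{id}$.

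For well-definedness and bijectivity I would argue as follows. Since $\phi$ is a homeomorphism onto $Z$ and closures of limit sets are limit sets, $\Theta(M_1,M_2)=\overline{\phi(M_1\times M_2)}$ is a limit set of $Y$, and applying $\psi$ together with $\psi\circ\phi=\mathrm{id}$ gives $\psi(\Theta(M_1,M_2))=M_1\times M_2$. To see that it is maximal, suppose a limit set $N$ of $Y$ contains $\Theta(M_1,M_2)$; then $\psi(N)$ is a limit set containing $M_1\times M_2$, so maximality of $M_1\times M_2$ forces $\psi(N)=M_1\times M_2$, whence $N\subseteq\psi^{-1}(M_1\times M_2)=\Theta(M_1,M_2)$ and $N=\Theta(M_1,M_2)$; thus $\Theta$ maps into $\ML(Y)$. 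Injectivity is immediate: applying the surjection $\psi$ to $\Theta(M_1,M_2)=\Theta(M_1',M_2')$ yields $M_1\times M_2=M_1'\times M_2'$, hence equality of factors. For surjectivity, given $N\in\ML(Y)$ the limit set $\psi(N)$ lies inside some product $M_1\times M_2$ of maximal limit sets, so $N\subseteq\psi^{-1}(M_1\times M_2)=\Theta(M_1,M_2)$, and maximality of $N$ gives equality.

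Continuity of $\Theta$ I would obtain by unwinding subbasic open sets. A subbasic open set of $\ML(Y)$ has the form $\{N : N\cap W\neq\mset\}$ with $W$ open in $Y$; since $W$ is open, $\overline{\phi(M_1\times M_2)}$ meets $W$ exactly when $(M_1\times M_2)\cap\phi^{-1}(W)\neq\mset$. Writing the open set $\phi^{-1}(W)$ as a union of boxes $U_1\times U_2$, the $\Theta$-preimage becomes a union of products $\{M_1:M_1\cap U_1\neq\mset\}\times\{M_2:M_2\cap U_2\neq\mset\}$, which is open in $\ML(X_1)\times\ML(X_2)$. For continuity of $\Theta^{-1}$ I would use $\psi(\Theta(M_1,M_2))=M_1\times M_2$, so that $M_i=\pi_i(\psi(\Theta(M_1,M_2)))$. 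Then the condition $M_1\cap U_1\neq\mset$ on $\Theta^{-1}(N)$ translates into $N\cap\psi^{-1}(U_1\times X_2)\neq\mset$, a subbasic open condition on $N$ because $\psi^{-1}(U_1\times X_2)$ is open in $Y$; the second coordinate is symmetric.

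The step I expect to be the main obstacle is the maximality bookkeeping, where the two directions must be combined correctly: one needs that $M_1\times M_2$ is maximal in $X_1\times X_2$ (for well-definedness and for surjectivity) and, separately, that a maximal limit set of $Y$ is forced to \emph{equal} $\psi^{-1}(M_1\times M_2)$ rather than merely contain or be contained in it. The continuity computations are routine once the subbasic sets are unwound, the only delicate point being the passage from an arbitrary open subset of $X_1\times X_2$ to boxes.
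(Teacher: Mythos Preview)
Your proof is correct and follows the same overall strategy as the paper's: establish that $\Theta$ is a bijection onto $\ML(Y)$ via the product/projection behaviour of limit sets together with the retraction $\psi$, and then verify bicontinuity on subbasic open sets. The only minor variation is in the last step: the paper shows $\Theta$ is \emph{open} by using $\phi$ to choose an open $W\subseteq Y$ with $\phi(V_1\times V_2)=Z\cap W$, whereas you prove continuity of $\Theta^{-1}$ directly through $\psi$ (rewriting $M_1\cap U_1\neq\emptyset$ as $N\cap\psi^{-1}(U_1\times X_2)\neq\emptyset$), which is a touch more direct and, incidentally, never invokes the density of $Z$.
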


\begin{proof}

   Obviously $\Theta$ is a one to one map.

   Let $(M_1,M_2)\in \ML(X_1)\times \ML(X_2)$. It is easily seen that $M_1\times M_2$ is a closed limit set of
   $X_1\times X_2$. Thus there exists a net in $Z$ that converges to all the points of
   $M := \overline{\phi(M_1\times M_2)} = \psi^{-1}(M_1\times M_2)$. Suppose now that $\{y\}\cup M$ is a limit set of $Y$.
   $Z$ is dense in $Y$ hence
   there exists a net $\{s_{\alpha}\}$ in $X_1\times X_2$ such that $\{\phi(s_{\alpha})\}$ converges to all the points of
   $\{y\}\cup M$. Then $\{s_{\alpha}\}$ converges to all the points of $\psi(y)\cup \psi(M) = \psi(y)\cup (M_1\times
   M_2)$. By using the canonical projections of $X_1\times X_2$ onto the factors we infer from the maximality of the
   limit sets $M_1$ and $M_2$ that $\psi(y)\in M_1\times M_2$ hence $y\in \psi^{-1}(M_1\times M_2)$. We have shown
   that the map $\Theta$ takes its values in $\ML(Y)$.

   Let now $L$ be a limit set in $Y$. As above, there is a net in $Z$ that converges to all the points of $L$ hence
   $\psi(L)$ is a limit set in $X_1\times X_2$. Another use of the canonical projections of the cartesian product
   shows that there exist maximal limit sets $M_1$, $M_2$ in $X_1$, $X_2$, respectively, such that $\psi(L)\subset
   M_1\times M_2$. Thus $L\subset \psi^{-1}(M_1\times M_2)$. We have shown that each maximal limit set of $Y$ is in
   the image of $\Theta$.

   If $U$ is an open subset of $Y$ then
   \begin{multline*}
      \{(M_1,M_2)\in \ML(X_1)\times \ML(X_2) \mid \Theta(M_1,M_2)\cap U\neq \mset\}\\ =
      \{(M_1,M_2)\in \ML(X_1)\times \ML(X_2) \mid \overline{\phi(M_1\times M_2)}\cap U\neq \mset\}\\
      = \{(M_1,M_2)\in \ML(X_1)\times \ML(X_2) \mid \phi(M_1\times M_2)\cap (U\cap Z)\neq \mset\}\\ =
       \{(M_1,M_2)\in \ML(X_1)\times \ML(X_2) \mid (M_1\times M_2)\cap \phi^{-1}(U\cap Z)\neq \mset\}.
   \end{multline*}
   There exist open sets $\{V^k_{\alpha}\}$, $k = 1,2$, such that $\phi^{-1}(U\cap Z) =
   \cup_{\alpha}(V^1_{\alpha}\times V^2_{\alpha})$. Thus
   \begin{multline*}
      \{(M_1,M_2)\in \ML(X_1)\times \ML(X_2) \mid \Theta(M_1,M_2)\cap U\neq \mset\}\\
      = \{(M_1,M_2)\in \ML(X_1)\times \ML(X_2) \mid (M_1\times M_2)\cap (\cup_{\alpha}(V^1_{\alpha}\times
      V^2_{\alpha}))\neq \mset\}\\
      = \{(M_1,M_2)\in \ML(X_1)\times \ML(X_2) \mid \cup_{\alpha}[(M_1\times M_2)\cap (V^1_{\alpha}\times
      V^2_{\alpha})]\neq \mset\}\\
      = \cup_{\alpha}[\{M_1\in \ML(X_1) \mid M_1\cap V^1_{\alpha}\neq \mset\}\times \{M_2\in \ML(X_2) \mid M_2\cap
      V^2_{\alpha}\neq \mset\}]
   \end{multline*}
   and the latter is an open set in $\ML(X_1)\times \ML(X_2)$. We conclude that $\Theta$ is continuous.

   Let now $V_k$ be open in $X_k$, $k = 1,2$; there exists an open set $W$ of $Y$ such that $\phi(V_1\times V_2) =
   Z\cap W$. We have
   \begin{multline*}
      \Theta(\{(M_1,M_2)\in \ML(X_1)\times \ML(X_2) \mid M_1\cap V_1\neq \mset, \quad M_2\cap V_2\neq \mset\})\\
      = \{\Theta(M_1,M_2)\in \ML(X_1\times X_2) \mid \phi(M_1\times M_2)\cap \phi(V_1\times V_2)\neq \mset\}\\
      = \{\Theta(M_1,M_2)\in \ML(X_1\times X_2) \mid \overline{\phi(M_1\times M_2)}\cap W\neq \mset\}
   \end{multline*}
   and this is an open subset of $\ML(X_1\times X_2)$. Thus we obtained that $\Theta$ is open and this concludes the
   proof.

\end{proof}

\begin{lem} \label{L:hulls}

   Let $A_1$ and $A_2$ be $C^*$-algebras and $I_1$, $I_2$ ideals in $A_1$, $A_2$, respectively. Then
   $\mathrm{hull}\Delta(I_1,I_2) = \Psi^{-1}(\mathrm{hull}I_1\times \mathrm{hull}I_2)$ and
   $\mathrm{hull}\Phi(I_1,I_2) = \overline{\Phi(\mathrm{hull}I_1\times \mathrm{hull}I_2)}$.

\end{lem}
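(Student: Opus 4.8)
The plan is to prove the two equalities separately, the first by directly unravelling the definitions and the second by reducing it to the density statement recorded in Section~\ref{S:intr} applied to the quotient algebras. For the first equality, the key observation is that for a prime ideal $P$ of $A_1\otimes A_2$ one has $P\supset I_1\otimes A_2$ if and only if $a_1\otimes A_2\subset P$ for every $a_1\in I_1$, that is, if and only if $I_1\subset P_{A_1}$; symmetrically $P\supset A_1\otimes I_2$ if and only if $I_2\subset P_{A_2}$. Hence $P\in\mathrm{hull}\Delta(I_1,I_2)$, i.e. $P\supset \Delta(I_1,I_2)=I_1\otimes A_2+A_1\otimes I_2$, exactly when $I_1\subset P_{A_1}$ and $I_2\subset P_{A_2}$. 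Since $\Psi$ carries $\Pme(A_1\otimes A_2)$ onto $\Pme(A_1)\times\Pme(A_2)$, both $P_{A_1}$ and $P_{A_2}$ are prime, so this condition is precisely $\Psi(P)=(P_{A_1},P_{A_2})\in\mathrm{hull}I_1\times\mathrm{hull}I_2$, which yields $\mathrm{hull}\Delta(I_1,I_2)=\Psi^{-1}(\mathrm{hull}I_1\times\mathrm{hull}I_2)$.

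For the second equality I would first record that $q_{I_1}\otimes q_{I_2}$ maps $A_1\otimes A_2$ onto $(A_1/I_1)\otimes(A_2/I_2)$ with kernel $\Phi(I_1,I_2)$, so the quotient $(A_1\otimes A_2)/\Phi(I_1,I_2)$ is canonically isomorphic to $(A_1/I_1)\otimes(A_2/I_2)$. The standard quotient correspondence then gives a homeomorphism of $\mathrm{hull}\Phi(I_1,I_2)$, with its relative topology inherited from $\Pme(A_1\otimes A_2)$, onto $\Pme((A_1/I_1)\otimes(A_2/I_2))$, and likewise $\mathrm{hull}I_j\cong\Pme(A_j/I_j)$. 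The crucial compatibility is that, under these identifications, the restriction of $\Phi$ to $\mathrm{hull}I_1\times\mathrm{hull}I_2$ corresponds to the canonical map $\Phi_0:\Pme(A_1/I_1)\times\Pme(A_2/I_2)\to\Pme((A_1/I_1)\otimes(A_2/I_2))$ attached to the quotient algebras; this is verified by the computation $(q_{\bar P_1}\otimes q_{\bar P_2})\circ(q_{I_1}\otimes q_{I_2})=q_{P_1}\otimes q_{P_2}$ for $P_j\supset I_j$ (with $\bar P_j=P_j/I_j$), whence the pull-back of $\ker(q_{\bar P_1}\otimes q_{\bar P_2})$ is exactly $\Phi(P_1,P_2)$.

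Granting this, the density statement of Section~\ref{S:intr} applied to $A_1/I_1$ and $A_2/I_2$ says that $\Phi_0$ maps onto a dense subset, so $\Phi(\mathrm{hull}I_1\times\mathrm{hull}I_2)$ is dense in $\mathrm{hull}\Phi(I_1,I_2)$ for its relative topology; since $\mathrm{hull}\Phi(I_1,I_2)$ is closed in $\Pme(A_1\otimes A_2)$, the relative closure agrees with the closure there, giving $\overline{\Phi(\mathrm{hull}I_1\times\mathrm{hull}I_2)}=\mathrm{hull}\Phi(I_1,I_2)$. I expect the main obstacle to be establishing this compatibility cleanly---checking that the quotient correspondence intertwines $\Phi$ with $\Phi_0$ and that the relative topologies match---rather than the density input, which is quoted. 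As a safeguard one could instead prove the easy inclusion $\overline{\Phi(\mathrm{hull}I_1\times\mathrm{hull}I_2)}\subset\mathrm{hull}\Phi(I_1,I_2)$ directly from the fact that $I_j\subset P_j$ forces $\Phi(I_1,I_2)\subset\Phi(P_1,P_2)$ (because $q_{P_j}$ factors through $q_{I_j}$) together with the closedness of hulls, and then supply only the density for the reverse inclusion.
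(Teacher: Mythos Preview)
Your argument for the first equality is correct and is exactly the paper's proof: both directions are obtained by unravelling the definitions of $\Delta$ and $\Psi$, using that $P\supset\Delta(P_{A_1},P_{A_2})$ and that $\Psi$ lands in $\Pme(A_1)\times\Pme(A_2)$.

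For the second equality the paper gives no argument at all; it simply cites \cite[Corollary~3]{L}. Your route---identifying $(A_1\otimes A_2)/\Phi(I_1,I_2)$ with $(A_1/I_1)\otimes(A_2/I_2)$, checking that the quotient correspondence intertwines $\Phi$ with the corresponding map $\Phi_0$ for the quotient algebras, and then invoking the density of $\Phi_0(\Pme(A_1/I_1)\times\Pme(A_2/I_2))$ in $\Pme((A_1/I_1)\otimes(A_2/I_2))$ as recorded in Section~\ref{S:intr}---is correct and is in fact the natural way to recover that cited corollary from the density statement already quoted in the paper. What you gain is a self-contained proof using only inputs the paper has already stated; what the paper gains is brevity by outsourcing the work to \cite{L}. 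Your safeguard paragraph (the easy inclusion plus density for the reverse) is also sound and is essentially a restatement of the same argument.
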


\begin{proof}

   Suppose $P\in \mathrm{hull}\Delta(I_1,I_2)$; then $\Psi(P) = (P_{A_1},P_{A_2})\in \Pme(A_1)\times \Pme(A_2)$
   and $P_{A_1}\supseteq I_1$, $P_{A_2}\supseteq I_2$. Thus $P\in \Psi^{-1}(\mathrm{hull}I_1\times
   \mathrm{hull}I_2)$. Conversely, if $P\in \Pme(A_1\otimes A_2)$ and $\psi(P)\in \mathrm{hull}I_1\times \mathrm{hull}I_2$
   then $P\supseteq \Delta(P_{A_1},P_{A_2})\supseteq \Delta(I_1,I_2)$ and we got the reverse inclusion.

   The second equality is \cite[Corollary 3]{L}.

\end{proof}

The following result is an improvement obtained by Archbold of \cite[Theorem 1.1]{Ka}.

\begin{thm}[Theorem 4.1 of \cite{A}]  \label{T:min}

   Let $A_1$ and $A_2$ be $C^*$-algebras. If $\Phi(I_1,I_2) = \Delta (I_1,I_2)$ for all $(I_1,I_2)\in
   \emph{\MP}(A_1)\times \emph{\MP}(A_2)$ then $\Phi$ is a homeomorphism of
   $\emph{\MP}(A_1)\times \emph{\MP}(A_2)$ onto $\emph{\MP}(A_1\otimes A_2)$.

\end{thm}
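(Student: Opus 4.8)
The plan is to deduce Theorem~\ref{T:min} from the abstract topological Lemma~\ref{L:retr} by specializing $X_1 = \Pme(A_1)$, $X_2 = \Pme(A_2)$, $Y = \Pme(A_1\otimes A_2)$, with $\phi = \Phi$ and $\psi = \Psi$. Recall from Section~\ref{S:intr} that there is a one-to-one correspondence between $\emph{\MP}(A_j)$ and $\ML(\Pme(A_j))$ (a minimal primal ideal corresponds to its hull, which is a maximal closed limit set), and likewise for $A_1\otimes A_2$. Under this correspondence the asserted homeomorphism $\ML(X_1)\times \ML(X_2)\to \ML(Y)$ produced by Lemma~\ref{L:retr} should translate into the desired statement that $\Phi$ maps $\emph{\MP}(A_1)\times \emph{\MP}(A_2)$ homeomorphically onto $\emph{\MP}(A_1\otimes A_2)$.

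First I would verify the structural hypotheses of Lemma~\ref{L:retr}. The facts that $\Phi$ restricts to a homeomorphism of $\Pme(A_1)\times \Pme(A_2)$ onto a dense subset of $\Pme(A_1\otimes A_2)$, that $\Psi$ is continuous, and that $\Psi\circ\Phi$ is the identity on $\Pme(A_1)\times\Pme(A_2)$ are all recalled in Section~\ref{S:intr}. So the only remaining hypothesis to check is the crucial identity $\Psi^{-1}(M_1\times M_2) = \overline{\Phi(M_1\times M_2)}$ for each $(M_1,M_2)\in \ML(\Pme(A_1))\times \ML(\Pme(A_2))$.

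This is exactly where the standing assumption $\Phi(I_1,I_2)=\Delta(I_1,I_2)$ on minimal primal pairs enters, and I expect it to be the main obstacle. The plan is to translate the condition on $M_1,M_2$ into the language of ideals: writing $M_j = \mathrm{hull}\,I_j$ for the minimal primal ideal $I_j$ corresponding to the maximal limit set $M_j$, Lemma~\ref{L:hulls} gives $\Psi^{-1}(\mathrm{hull}\,I_1\times\mathrm{hull}\,I_2) = \mathrm{hull}\,\Delta(I_1,I_2)$ and $\overline{\Phi(\mathrm{hull}\,I_1\times\mathrm{hull}\,I_2)} = \mathrm{hull}\,\Phi(I_1,I_2)$. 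Hence the two sets $\Psi^{-1}(M_1\times M_2)$ and $\overline{\Phi(M_1\times M_2)}$ coincide precisely when $\mathrm{hull}\,\Delta(I_1,I_2) = \mathrm{hull}\,\Phi(I_1,I_2)$, and this follows immediately from the hypothesis $\Delta(I_1,I_2)=\Phi(I_1,I_2)$. Thus Lemma~\ref{L:retr} applies and yields the homeomorphism $\Theta$ of $\ML(\Pme(A_1))\times\ML(\Pme(A_2))$ onto $\ML(\Pme(A_1\otimes A_2))$.

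Finally I would check that $\Theta$, transported through the correspondences between $\emph{\MP}$ and $\ML(\Pme(\cdot))$, is nothing but the map induced by $\Phi$. Since $M_j = \mathrm{hull}\,I_j$ with $I_j$ the minimal primal ideal associated to $M_j$, the image $\Theta(M_1,M_2) = \overline{\Phi(\mathrm{hull}\,I_1\times\mathrm{hull}\,I_2)} = \mathrm{hull}\,\Phi(I_1,I_2)$ is, under the hypothesis, $\mathrm{hull}\,\Delta(I_1,I_2)$, which is the hull of a minimal primal ideal of $A_1\otimes A_2$; hence $\Theta$ corresponds exactly to sending $(I_1,I_2)\mapsto \Phi(I_1,I_2)$. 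Because the bijections $\emph{\MP}(A_j)\leftrightarrow \ML(\Pme(A_j))$ and $\emph{\MP}(A_1\otimes A_2)\leftrightarrow \ML(\Pme(A_1\otimes A_2))$ are homeomorphisms for the respective topologies, the homeomorphism $\Theta$ furnished by Lemma~\ref{L:retr} delivers exactly the assertion that $\Phi$ is a homeomorphism of $\emph{\MP}(A_1)\times\emph{\MP}(A_2)$ onto $\emph{\MP}(A_1\otimes A_2)$, completing the proof.
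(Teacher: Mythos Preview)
Your proposal is correct and follows essentially the same route as the paper: both arguments apply Lemma~\ref{L:retr} with $X_j=\Pme(A_j)$, $Y=\Pme(A_1\otimes A_2)$, $\phi=\Phi$, $\psi=\Psi$, use Lemma~\ref{L:hulls} to convert the hypothesis $\Phi(I_1,I_2)=\Delta(I_1,I_2)$ into the required identity $\overline{\Phi(M_1\times M_2)}=\Psi^{-1}(M_1\times M_2)$, and then transport the resulting homeomorphism $\Theta$ through the canonical homeomorphism between $\ML(\Pme(A))$ and \textup{\MP}$(A)$.
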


\begin{proof}

   We shall exploit the fact that for a $C^*$-algebra $A$, the map $\mathrm{hull}(I)\to I$ is a homeomorphism of
   $\F(\Pme(A))$ onto $\mathrm{Id}(A)$ that maps $\ML(\Pme(A))$ onto \textup{\MP}$(A)$. Thus the conclusion
   will be obtained once we show that $(M_1,M_2)\to \mathrm{hull}\Phi(\mathrm{ker}M_1,\mathrm{ker}M_2)$ is a
   homeomorphism of $\ML(\Pme(A_1))\times \ML(\Pme(A_2))$ onto $\ML(\Pme(A_1\otimes A_2))$.

   By Lemma \ref{L:hulls}, the hypothesis on $A_1\otimes A_2$ is $\overline{\Phi(M_1\times M_2)} =
   \Psi^{-1}(M_1\times M_2)$. Thus the maps $\Phi : \Pme(A_1)\times \Pme(A_2)\to \Pme(A_1\otimes A_2)$ and $\Psi :
   \Pme(A_1\otimes A_2)\to \Pme(A_1)\times \Pme(A_2)$ satisfy the conditions of Lemma \ref{L:retr} which yields the
   desired homeomorphism.

\end{proof}

It is remarked in \cite[p. 142]{A} that there is no known example of $C^*$-algebras $A_1$, $A_2$ and minimal primal
ideals $I_1$, $I_2$ of these algebras such that $\Phi(I_1,I_2)\neq \Delta(I_1,I_2)$. By contrast, one constructs
easily an example of topological spaces $X_1$, $X_2$, and $Y$, a homeomorphism $\phi$ of $X_1\times X_2$ onto a
dense subset of $Y$, a continuous map $\psi : Y\to X_1\times X_2$ such that $\psi\circ \phi$ is the identity map of
$X_1\times X_2$ and maximal limit sets $M_1\subset X_1$, $M_2\subset X_2$ such that $\psi^{-1}(M_1\times M_2)\neq
\overline{\phi(M_1\times M_2)}$.

\begin{ex}

   Let $X_1 = X_2 := [0,1]$ with the usual topology and
   \[
    Y := ([0,1]\times [0,1])\cup \{y\}
   \]
   where $y$ is a point
   not in the square. A base for the topology of $Y$ consists of the topology of the square together with the family
   of all the sets $(U\setminus \{v\})\cup \{y\}$ where $v := (1,1)$ and $U$ runs through all the open
   neighbourhoods of $v$. Let $\phi$ be the identity map of the square and $\psi$ the map that is the identity on
   the square and takes $y$ to $v$. For $M_1 = M_2 := \{1\}$ we have $\overline{M_1\times M_2} = \{v\}$ but
   $\psi^{-1}(M_1\times M_2) = \{v,y\}$.

\end{ex}

\bibliographystyle{amsplain}
\bibliography{}

\end{document}